\documentclass{article}    

\textwidth=6.5in
\oddsidemargin=0in

\usepackage{amsmath,amssymb,amsthm,epsfig}      


\newtheorem{theorem}{Theorem}[section]
\newtheorem*{theorem*}{Theorem}
\newtheorem{proposition}[theorem]{Proposition}

\newtheorem*{corollary*}{Corollary}

\newtheorem*{conjecture*}{Conjecture}

\theoremstyle{remark}

\theoremstyle{definition}


\renewcommand{\mod}{\mathrm{mod}}
\newcommand{\Zn}{\mathbb{Z}_{n}}

\newcommand{\hide}[1]{{}}  

\newcommand{\cliquer}{\texttt{cliquer}}

\begin{document}

\title{Improved Lower Bounds on the Classical Ramsey Numbers
$R(4,22)$ and $R(4,25)$}

\author{
Madison Lindsay$^1$ and John W. Cain$^2$
\footnote{$^1$ Department of Mathematics and Computer Science,
University of Richmond, 28 Westhampton Way, Richmond, Virginia 23173, USA.  
$^2$ Department of Mathematics, Harvard University, One Oxford Street, Cambridge, Massachusetts 02138, USA.
\newline
Corresponding author's e-mail: jcain2@math.harvard.edu}
}

\maketitle

\bigskip

{\small
\begin{center}
{\sc Abstract}
\end{center}
Circulant graphs have been used to effectively establish lower bounds on many classical Ramsey numbers.  Here, we
construct circulant graphs of prime order that sharpen the best published lower bounds on several Ramsey numbers.  Generalizing
previous work in which quadratic and cubic residues were used to construct circulant graphs for the same purpose, we report
on the use of quartic and higher-order residues.

\vspace{0.5 true cm}
\noindent
KEYWORDS:  Ramsey numbers, circulant graphs, quartic residues
}

\bigskip

\large

\section{Introduction}

The problem of calculating [classical, two-color] Ramsey numbers $R(p,q)$ presents an attractive
challenge for scholars with varied mathematical interests, including number theory, combinatorial design,
and high-performance computing.
Given positive integers $p$ and $q$, the {\em Ramsey number} $R(p,q)$ is defined as the least integer $n$
for which every graph on $n$ vertices contains either a clique of size $p$ or an independent set of size $q$
(see Graham, Rothschild and Spencer~\cite{graham-RS}, Section 4.1).
Equivalently, one writes $R(p,q) = n$ if
\begin{itemize}
  \item There exists a two-coloring (say red and blue) of the edges of the complete graph  on $n-1$ vertices (denoted $K_{n-1}$)
  containing {\em neither}
  a red monochromatic $K_{p}$ as a subgraph {\em nor} a blue monochromatic $K_{q}$ as a subgraph; and
  \item For every two-coloring of the edges of $K_{n}$, there exists {\em either} a red monochromatic $K_{p}$ as a subgraph {\em or}
  a blue monochromatic $K_{q}$ as a subgraph (and possibly both).
\end{itemize}
For a regularly-updated compendium of known results on Ramsey numbers, please refer to~\cite{rad}.

Many of the best known lower bounds on Ramsey numbers are constructive.
The construction shown in Figure~\ref{figure1} establishes that
$R(4,3) > 8$ by decomposing $K_{8}$ into two disjoint subgraphs, one containing no $K_{3}$ as a subgraph (left panel)
and the other containing no $K_{4}$ as a subgraph (right panel).
The figure illustrates one of many instances in which {\em circulant} graphs are used to establish lower bounds, and our main
result (Proposition 2.1 below) is another instance.
For that reason, let us define what it means for a graph $G$ to be circulant, formulating our definition so as to allow convenient notation
for such graphs.
Let $G$ be a graph on $n$ vertices whose vertex set is identified with $\Zn$.
We say that $G$ is {\em circulant} if there exists a vertex labeling and set $S \subset \left\{ 1, 2, \dots, \lfloor
\frac{n}{2} \rfloor \right\}$ such that $\{i,j\}$ is an edge if and only if
$$
\min \left\{ (i-j) \: (\mod \; n), \; \; (j-i) \: (\mod \; n) \right\} \in S.
$$
\noindent
For such graphs, we adopt the notation of Wu et al.~\cite{wu-SLX} and write $G = G_{n}(S)$.
The circulant graphs in the left and right panels of Figure~\ref{figure1} may be denoted $G_{8}(S_{1})$ and $G_{8}(S_{2})$
where $S_{1} = \left\{1, 4 \right\}$ and $S_{2} = \left\{2, 3\right\}$.



\begin{figure}
  \begin{center}
   \includegraphics[width=0.7\textwidth]{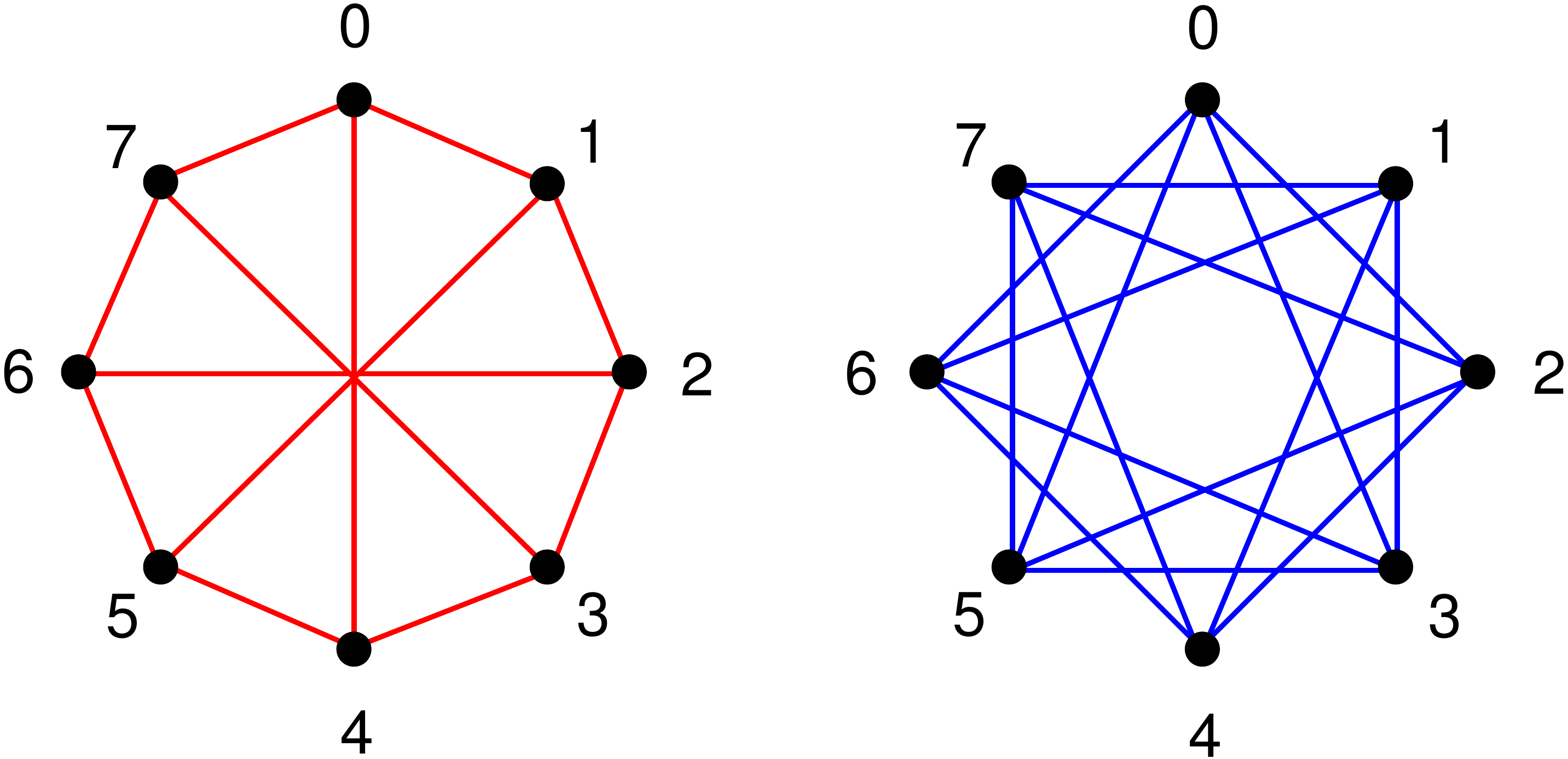}
  \end{center}
  \caption{A construction proving that $R(3,4) > 8$.}
  \label{figure1}
\end{figure}

\section{Using quartic residues to improve lower bounds}

Greenwood and Gleason~\cite{greenwood-G} suggested the use of quadratic, cubic, and quartic residues as a means of generating
circulant graphs which establish lower bounds on Ramsey numbers.  To show that $R(4,4) > 17$, one may partition the set
$S = \{ 1, 2, 3, \dots \frac{17-1}{2} \}$ into quadratic residues $S_{1} = \{1, 2, 4, 8\}$ and non-residues $S_{2} = \{ 3, 5, 6, 7 \}$
modulo 17.  The circulant graphs $G_{17}(S_{1})$ and $G_{2}(S_{2})$ form a partition of $K_{17}$ into disjoint subgraphs, neither of which
contains a copy of $K_{4}$ as a subgraph.  Similarly, to show that $R(6,6) > 101$  (the best known lower bound on that Ramsey number)
one may partition the set $S = \{ 1, 2, 3, \dots 50 \}$ into a disjoint union of sets $S_{1}$ and $S_{2}$ consisting of quadratic residues
and non-residues modulo 101, respectively, and argue that neither of the circulant graphs $G_{101}(S_{1})$ and $G_{101}(S_{2})$ contains $K_{6}$ as a
subgraph.
More recently, Su et al.~\cite{su-LLL} utilized cubic residues in a similar fashion, sharpening the best known lower bounds on 
multiple Ramsey numbers.

The proof of the following Proposition involves the use of quartic residues in precisely the same way.  {\em The two bounds in the
Proposition are the best published bounds, but we have reason to believe that others~\cite{lslw} may have discovered these bounds independently
several years ago, likely using the same construction that we offer.}


\begin{proposition}
The following lower bounds hold:
$$
\begin{aligned}
\mbox{(a)} & \qquad R(4,22) > 313; \\
\mbox{(b)} & \qquad R(4,25) > 457.
\end{aligned}
$$
\end{proposition}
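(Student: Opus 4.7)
The plan is to reduce each inequality to exhibiting a circulant graph $G_{p}(S_{1})$ on $p$ vertices (with $p = 313$ or $p = 457$) whose clique number is at most $3$ and whose independence number is at most $21$ (respectively $24$).  Coloring the edges of $G_{p}(S_{1})$ red and the edges of its complement $G_{p}(S_{2})$ blue then yields a two-coloring of $K_{p}$ containing neither a red $K_{4}$ nor a blue $K_{22}$ (respectively $K_{25}$), which is exactly what is needed to conclude $R(4,22) > 313$ and $R(4,25) > 457$.

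The connection set $S_{1}$ will be built from quartic residues, following the Greenwood--Gleason and Su et al.\ template.  First I would check that $313$ and $457$ are primes with $p \equiv 1 \pmod{8}$, so that the subgroup of nonzero quartic residues has index $4$ in $\Zpstar$ and contains $-1$.  Fixing a primitive root $g$ modulo $p$ and writing $p - 1 = 4m$, define the cosets $C_{j} = \{ g^{4k+j} \bmod p : 0 \le k < m \}$ for $j = 0, 1, 2, 3$.  Because $-1 \in C_{0}$, each $C_{j}$ is closed under negation, so each $C_{j}$ folds via $x \mapsto \min(x, p-x)$ to a valid subset of $\{ 1, \dots, (p-1)/2 \}$.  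For each prime I would take $S_{1}$ to be an appropriate union of these cosets, and $S_{2}$ to be the complementary union; the natural candidates to try, in order, are $S_{1} = C_{0}$ and then the six two-coset unions $C_{i} \cup C_{j}$.

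What remains is to certify $\omega(G_{p}(S_{1})) \le 3$ and $\alpha(G_{p}(S_{1})) \le 21$ (resp.\ $24$), and this is the main obstacle: a maximum-clique computation on a graph with several hundred vertices is in general intractable.  The redeeming feature is the size of the automorphism group.  Because $S_{1}$ is a union of cosets of the index-$4$ subgroup $H = C_{0}$, the additive translations of $\Zp$ together with the multiplicative action of $H$ act as graph automorphisms, giving an automorphism group of order at least $p(p-1)/4$.  I would exploit this transitivity to assume that any candidate clique or independent set contains the vertex $0$ and then branch on representatives of the $H$-orbits inside its neighborhood, shrinking the search to a size well within the reach of the \cliquer\ solver.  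The genuine content of the proof is then the explicit record of the coset(s) chosen for each prime, together with the solver's verification that the two parameters are as claimed.
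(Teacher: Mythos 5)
Your proposal follows essentially the same route as the paper: both primes are noted to be $\equiv 1 \pmod 8$ so the quartic residues fold symmetrically into $\{1,\dots,(p-1)/2\}$, $S_{1}$ is taken to be exactly the set of quartic residues (your $C_{0}$), and the bounds $\omega(G_p(S_1)) = 3$ and $\omega(G_p(S_2)) = 21$ (resp.\ $24$) are certified computationally with \cliquer. The only difference is that you propose a symmetry-based reduction of the clique search, whereas the paper simply runs \cliquer\ directly (7 hours for $p=313$, about 10 days for $p=457$); this is an implementation detail, not a change in the argument.
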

\begin{proof}  
Note that 313 and 457 are primes, both of which are congruent to 1 (mod 8).

\vspace{0.3 true cm}
\noindent
(a)  For $p = 313$, define the following subsets of $S = \left\{ 1, 2, \dots, \frac{p-1}{2} \right\}$:
$$
\begin{aligned}
S_{1} & = \{ 1, 3, 4, 9, 11, 12, 16, 19, 26, 27, 33, 36, 44, 48, 50, 57, 58, 64, 70, 76, 78, 79, 81,\\
      & \qquad  83, 85, 98, 99, 103, 104, 108, 113, 119, 121, 132, 137, 139, 142, 144, 150 \} \\
S_{2} & = S \backslash S_{1}.
\end{aligned}
$$
\noindent
Here, $S_{1}$ consists of quartic residues $(\mod \; p)$ while $S_{2}$ consists of quartic nonresidues $(\mod \; p)$.
The circulant graphs $G_{p}(S_{1})$ and $G_{p}(S_{2})$ form a decomposition of the complete graph $K_{p}$ into disjoint
subgraphs.  Using the freely available software \cliquer~\cite{cliquer}, we calculated that the clique numbers $G_{p}(S_{1})$ 
and $G_{p}(S_{2})$ are $3$ and $21$, respectively, thereby establishing Part (a) of the Proposition.

\vspace{0.3 true cm}
\noindent
(b)  For $p = 457$ proceed as in the proof of Part (a), except using
$$
\begin{aligned}
S_{1} & = \{  1, 4, 6, 7, 9, 16, 17, 19, 24, 28, 29, 36, 42, 49, 50, 54, 63, 64, 68, 73, 75, 76, 79, 81, 94, \\
      & \qquad 96, 102, 107, 110, 112, 114, 116, 119, 130, 133, 134, 141, 144, 153, 155, 157, 163, \\
      & \qquad 165, 168, 171, 174, 185, 195, 196, 200, 201, 203, 205, 215, 216, 218, 227\} \\
S_{2} & = S \backslash S_{1}.
\end{aligned}
$$
\noindent
Using \cliquer, we calculated that the clique numbers of $G_{p}(S_{1})$ and $G_{p}(S_{2})$ are $3$ and $24$, respectively,
establishing Part (b) of the Proposition.
\end{proof}

\section{Discussion}
Our strategy of using quartic residues to generate circulant graphs for the purpose of improving lower bounds on classical Ramsey
numbers is far from novel.
Quartic residues were first mentioned in this context by Greenwood and Gleason~\cite{greenwood-G}.
In the sixty years since their article was published, the advent of modern computing has allowed for the exploration of much larger
graph structures.
Our computer-assisted proof of Proposition 2.1 exploits an efficient algorithm (at least by current standards)
for computing clique numbers, implemented via the freely available \cliquer~software.
Details regarding the algorithm used by \cliquer~are supplied in the documentation on the authors' website~\cite{cliquer}.
Of course, the problem of calculating clique numbers becomes computationally intractable for graphs of sufficient size.  Our simulations
were performed on a dedicated desktop computer running the Ubuntu Linux operating system.  Demonstrating that
$R(4,22) > 313$ required 7 hours of computing time, and verifying that $R(4,25) > 457$ required approximately 10 days.
We did not attempt computations that appeared likely to require more than two weeks of computing time on a 
standard desktop computer.
We did, however, explore the use of quartic, quintic, and higher-order (up to 8th power) residues to generate circulant graph
structures on at most 500 vertices.
Some of those graphs generated respectable lower bounds; for instance, quintic residues modulo 71 can be used to show that
$R(3,15) > 71$, which is nearly as strong as the best reported result of $R(3,15) > 72$.
However, among the graphs that we explored, quintic and higher residues did not produce improvements over any best published bounds.

We attempted (unsuccessfully) to calculate the clique numbers of the graph structures described in the proof of Proposition
2.1 without computer assistance.  We would be most interested to see a non-computer-assisted proof of these results.



\begin{center}
  {\bf Acknowledgments}
\end{center}
The lead author gratefully acknowledges the financial support of a University of Richmond Summer Research Fellowship.


\end{document}